\newcommand{\lr}[1]{\left(#1\right)}
\newcommand{\abs}[1]{\left\vert #1 \right\vert}
\newcommand{\norm}[2]{\left\Vert #1 \right\Vert _{#2}}
\newcommand{\set}[1]{\left\{#1\right\}}
\newcommand{\scalar}[2]{\langle #1,#2\rangle}
\newcommand{\real}{\mathbb R}
\newcommand{\nat}{{\mathbb N}}
\newcommand{\gd}{g^\delta}
\newcommand{\fd}{f^\delta}
\newcommand{\fad}{f^\delta_\alpha}
\newcommand{\bsubf}{b_f}
\newcommand{\yd}{y^\delta}
\newcommand{\Lmu}{L^2(S,\Sigma,\mu)}
\newcommand{\E}{\mathbb E}
\newcommand{\Pa}{\Phi_\alpha}
\newcommand{\tPa}{\tilde\Phi_\alpha}
\newcommand{\Ra}{R_\alpha}
\newcommand{\tRa}{\tilde R_\alpha}
\newcommand{\Aop}{\operatorname{A}}
\newcommand{\essinf}{\operatorname{essinf}}
\newcommand{\bast}{b_{\ast}}
\newcommand{\boast}{b^{\ast}}
\newcommand{\phast}{\varphi_{\ast}}
\newtheorem*{fact}{Fact}
\newtheorem{prop}{Proposition}
\newtheorem{cor}{Corollary}
\newtheorem{lem}{Lemma}
\newtheorem{de}{Definition}
\newtheorem{ass}{Assumption}
\theoremstyle{definition}
\newtheorem{rem}{Remark}
\newtheorem{xmpl}{Example}
\def\a{\alpha}
\def\b{\beta}
\def\d{\delta}
\def\l{\lambda}
\def\O{\Omega}
\def\m{\mu}
\def\t{\tau}
\def\R{{\mathbb{R}}}
\def\N{{\mathbb{N}}}
\title[Regularization of multiplication operators]{Regularization of
  linear ill-posed problems involving  multiplication operators}
\author{Peter Math\'e}
\address{Weierstra{\ss} Institute for Applied Analysis and
Stochastics, Mohren\-straße 39, 10117 Berlin, Germany}
\email{peter.mathe@wias-berlin.de}
\author{M.~Thamban Nair}
\address{Department of Mathematics, IIT Madras, Chennai 600036, India}
\email{mtnair@iitm.ac.in}
\author{Bernd Hofmann}
\address{Department of Mathematics, Chemnitz University of Technology,
  09107 Chemnitz,  Germany}
\email{hofmannb@mathematik.tu-chemnitz.de}
\thanks{BH was supported by German Research Foundation (DFG-grant HO 1454/12-1).}
\date{Version: \today}
\begin{document}
\begin{abstract} 
  We study regularization of ill-posed equations involving
  multiplication operators when the multiplier function is
  positive almost everywhere and zero is an accumulation point of the
  range of this function.
Such equations naturally arise from equations based on non-compact self-adjoint operators in Hilbert space,
after applying unitary transformations arising out of the spectral
theorem.
For classical regularization theory, when noisy
  observations are given and the noise is deterministic and bounded,
  then non-compactness of the ill-posed equations is a minor
  issue. However, for statistical ill-posed equations with non-compact
  operators less is known if the data are blurred by white noise.
  We develop a regularization theory with
  emphasis on this case. In this context, we highlight several aspects,
  in particular we discuss the intrinsic degree of ill-posedness in terms of
  rearrangements of the multiplier function. Moreover, we address the required
  modifications of classical regularization schemes in order to be
  used for non-compact statistical problems, and we also introduce the
  concept of the effective ill-posedness of the operator equation
  under white noise. This study is concluded with prototypical
  examples for such equations, as these are deconvolution equations
  and certain final value problems in evolution equations.
\end{abstract}
\keywords{statistical ill-posed problem, non-compact operator,
  regularization, degree of ill-posedness}
\maketitle

\section{Introduction, background}
\label{sec:intro}
This study is devoted to multiplication operators in the context of ill-posed
linear operator equations
\begin{equation}
  \label{eq:opeq}
   \Aop\, x \,=\, y
 \end{equation}
with bounded self-adjoint positive operators~$\Aop\colon H \to H$ mapping in the  (separable) Hilbert
space~$H$ and possessing a non-closed range $\mathcal{R}(A)$.
We fix a measure space~$\lr{S,\Sigma,\mu}$ on the set $S$ with a
\emph{$\sigma$-finite} measure~$\mu$ defined on the $\sigma$-algebra $\Sigma$, and consider instead of \eqref{eq:opeq} the equation
\begin{equation}
\label{eq:opeqmult}
 b(s) f(s) = g(s),\quad s\in S
 \end{equation}
in the setting of the Hilbert space~$L^{2}(S,\Sigma,\mu)$. In
particular, the equation above is to hold $\mu$-almost everywhere ($\mu$-a.e.). This multiplication setup is prototypical
based on the following version of the {\it Spectral Theorem}.
\begin{fact}[see e.g.~~\cite{MR0150600} for a detailed discussion]
For every bounded self-adjoint
operator~$\Aop\colon H\to H$ there is a $\sigma$-finite measure
space~$\lr{S,\Sigma,\mu}$, a real-valued essentially bounded function~$b\in
L^{\infty}(S,\Sigma,\mu)$ and an isometry  $U\colon H \to \Lmu$
such that~$U\Aop U^{-1} = M_{b}$, where $M_b$  is
the  multiplication operator, assigning $f\in\Lmu \mapsto b\cdot
f\in\Lmu$.
\end{fact}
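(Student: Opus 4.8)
The plan is to build the measure space, the multiplier, and the unitary directly out of the spectral data of $\Aop$ via a cyclic decomposition, so that $\Aop$ is turned into multiplication by the spectral variable. Since $\Aop$ is bounded and self-adjoint, its spectrum $\sigma(\Aop)$ is a compact subset of $\real$, and the continuous functional calculus supplies a $*$-homomorphism $f\mapsto f(\Aop)$ from $C(\sigma(\Aop))$ into the bounded operators on $H$, which is what I will feed into a Riesz-representation argument on each cyclic block.

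First I would decompose $H$ into cyclic subspaces. Using separability of $H$, an exhaustion (Gram--Schmidt-type) argument yields an at most countable orthogonal decomposition $H=\bigoplus_{n}H_n$, where each $H_n=\overline{\set{p(\Aop)\xi_n\colon p\ \text{a polynomial}}}$ is the cyclic subspace generated by a unit vector $\xi_n$: one picks $\xi_1$, forms $H_1$, picks $\xi_2$ in its orthogonal complement, and iterates, separability guaranteeing that the process terminates after countably many steps. Each $H_n$ is invariant under $\Aop$ and hence reduces it.

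Next, on a single cyclic block I would realize $\Aop|_{H_n}$ as a multiplication operator. The functional $f\mapsto\scalar{f(\Aop)\xi_n}{\xi_n}$ is a positive linear functional on $C(\sigma(\Aop))$, so by the Riesz representation theorem there is a finite Borel measure $\mu_n$ on $\sigma(\Aop)$ with $\scalar{f(\Aop)\xi_n}{\xi_n}=\int_{\sigma(\Aop)}f\,d\mu_n$ and total mass $\mu_n(\sigma(\Aop))=\norm{\xi_n}{}^2$. Computing $\norm{f(\Aop)\xi_n}{}^2=\scalar{\abs{f}^2(\Aop)\xi_n}{\xi_n}=\int\abs{f}^2\,d\mu_n$ shows that $f(\Aop)\xi_n\mapsto f$ is isometric with dense range, hence extends to a unitary $U_n\colon H_n\to L^2(\sigma(\Aop),\mu_n)$ intertwining $\Aop|_{H_n}$ with multiplication by the coordinate function $\lambda\mapsto\lambda$. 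Finally I would glue: take $S=\bigsqcup_n\sigma(\Aop)$, equip it with the $\sigma$-algebra and measure $\mu$ induced by the $\mu_n$, and set $U=\bigoplus_n U_n$. Because the $\mu_n$ are finite and countably many, $\mu$ is $\sigma$-finite and $U$ is the required isometry onto $\Lmu$; the multiplier $b$ is the coordinate function on each copy, real-valued since $\sigma(\Aop)\subset\real$ and essentially bounded since $\abs{\lambda}\le\norm{\Aop}{}$ there. Then $U\Aop U^{-1}=M_b$ because it acts by multiplication by $\lambda$ on each copy.

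The main obstacle is the first step together with the final bookkeeping: producing the cyclic decomposition and then passing from the many finite measures $\mu_n$ on copies of $\sigma(\Aop)$ to one honest $\sigma$-finite measure $\mu$ on $S$ without disturbing the intertwining relation. The Riesz representation and the polynomial-to-$L^2$ isometry on each block are routine by comparison. For the full details one may consult~\cite{MR0150600}.
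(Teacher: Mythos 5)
The paper offers no proof of this Fact at all---it is stated as a known result with only the citation to Halmos---and your argument is precisely the standard cyclic-decomposition proof given in that reference: orthogonal splitting into cyclic subspaces via separability, Riesz representation applied to $f\mapsto\scalar{f(\Aop)\xi_n}{\xi_n}$ on each block, and gluing over a countable disjoint union to get a $\sigma$-finite measure. Your outline is correct and complete in all essentials, so there is nothing to compare beyond noting that you have supplied the details the paper deliberately omits.
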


Since within this study the
linear operator~$\Aop$ is assumed to be bounded  self-adjoint and
positive, we have
a constant $\overline b>0$ such that the multiplier function
$b$ in \eqref{eq:opeqmult} obeys the inequalities
$0<b(s) \le \overline b<\infty$ for almost all $s \in S$. However,
as a consequence of the ill-posedness of equation \eqref{eq:opeq}, which
implies that zero is an accumulation point of the spectrum of $\Aop$, the
function $b$ must have essential zeros, which means that $\essinf_{s \in S} b(s)=0$.

So, the recovery of the element $x \in H$ in \eqref{eq:opeq} from noisy
data
\begin{equation}
\label{eq:ill-posed}
   \yd := \Aop x + \delta\eta
 \end{equation}
of the right-hand side $y$ carries over to the reconstruction of the
solution $f(s),\, s\in S,$ of equation \eqref{eq:opeqmult} from noisy data
\begin{equation}
   \label{eq:ill-posed-multiplier}
\gd:= U \yd = b \cdot (U x) + \delta (U \eta)
\end{equation}
of the right-hand side $g= U(A x)$. The variable $\eta$ turns to the
noise~$\xi := U\eta$, and further properties will be given in
Definitions~\ref{de:det-noise} and~\ref{de:white-noise} below. The
analysis will be different for bounded deterministic noise and for
statistical white noise.

Thus, we consider the reconstruction of the function $f$
in the Hilbert space~$L^{2}(S,\Sigma,\mu)$, from the knowledge of the noisy data
\begin{equation}
  \label{eq:base-multiplication}
  \gd(s): = b(s) f(s) + \delta \xi(s),\quad s\in S,
\end{equation}
where $b\in L^\infty(S,\Sigma,\mu)$ is given, and we assume that $\delta>0$ denotes the noise level.

Statistical inverse problems under {\it  white noise} and the reduction to
multiplication problems as~(\ref{eq:base-multiplication}) were
discussed in~\cite{MR2211346}. Multiplier equations as
in~(\ref{eq:opeqmult}) were studied
in \cite{MR1701353} from the regularization
point of view for  $S=(0,1)$ and~$\mu$ being the Lebesgue
measure, which we throughout will denote by $\lambda$.

The outline of the remainder of this paper is as follows: In Section~\ref{sec:assumptions} we shall
describe the framework for the analysis, and we will also provide
several auxiliary results that might be of interest.
Section~\ref{sec:error} is devoted to the error analysis. The main
results, presented in Propositions~\ref{est-derministic} \&~\ref{est-white}, yield estimations from above of the regularization error under
bounded deterministic and white noise, respectively. Finally,
Section~\ref{sec:op-eqns} exhibits that the well-known (and typically
non-compact)
deconvolution and final value problems fit the considered framework
after turning from operator equations to the current setup by means of
the Fourier transform.

\section{Notation and auxiliary results}
\label{sec:assumptions}

In this section we shall first discuss the impact of properties of the
multiplier function~$b$ on the intrinsic difficulty of the inverse
problem. Then we turn to introducing the concept of solution smoothness. Finally, we
introduce the concept of regularization schemes.

\subsection{Degree of ill-posedness incurred by the multiplier
  function~$b$}
\label{sec:degree}

As was mentioned in Section~\ref{sec:intro} ill-posedness of the
multiplication problem (\ref{eq:opeqmult}) is a consequence of having zero as accumulation
point of the essential range of $b$. For the character of ill-posedness, however, the
location of the essential zeros of the function~$b$ should not be relevant. Therefore, a normalization of the function~$b$
in~(\ref{eq:opeqmult}) is desirable. In this context,
the increasing rearrangement of~$b$ was
considered in the study~\cite{MR1701353}.  For such setting we let~$\boast$ denote the
increasing rearrangement
$$
\boast (t) := \sup\set{\tau: \; \mu(\set{s: \, b(s) \leq \tau}) \leq
  t},\quad t>0.
$$
However, this approach is limited to underlying
$S$ and $\mu$ with finite measure values $\mu(S)$.

Another normalization is the decreasing
rearrangement~$\bast$ of the multiplier function~$b$, which is based
on the distribution function~$d_b$,  defined by  $d_b(t):= \mu\{s\in
S: b(s)>t\}$ for $t>0$. Then we let the  decreasing
rearrangement of $b$  be given as
$$
 \bast(t):=  \inf\set{\t>0: d_b(\t)\leq  t},\quad  0<t< \m(S).
$$
Note that ~$\bast$ is defined on~$[0,\mu(S))$, equipped with the
  Lebesgue measure $\lambda$.
  In the context of ill-posed equations this normalization was first
  used in~\cite{MR1264058}.

We also notice that for infinite measures, that is, for~$\mu(S)=\infty$, the function~$\bast$
may have infinite value.  Therefore we confine to the case of $b$ satisfying the following assumption.
\begin{ass}\label{ass:distfunction}
  If~$\mu(S)=\infty$ then the function~$b$ is assumed to \emph{vanish
    at infinity},
in the sense that
$\mu\{s\in S: b(s)>t\}$ is finite for every $t>0$.
\end{ass}
\begin{rem}
  Assumption~\ref{ass:distfunction} is important to guarantee the
  existence of a decreasing rearrangement~$\bast$ which is
  equimeasurable with~$b$, meaning that it has the same distribution
function as~$b$, i.e.,
$$
\lambda(\set{\t:\, \bast(\t) >t}) =
\mu(\set{s\in S:\, b(s) >t}),\quad  t>0.
$$
The characterization of cases when the function~$\bast$ is
equimeasurable with~$b$ was first given by
Day
in~\cite{MR2619530}, and for
infinite measures~$\mu(S)=\infty$ the
Assumption~\ref{ass:distfunction} is known to be sufficient to
guarantee this, see~\cite[Chapt.~VII]{MR0372140} for details.
For calculus with decreasing rearrangements we also refer
to~\cite[Chapt.~2]{MR928802}.
Moreover, functions vanishing at infinity are important in Analysis,
see~\cite[Chapt.~3]{MR1817225}.
\end{rem}

For the subsequent analysis we shall first assume
that our focus is on the Lebesgue measure $\mu=\lambda$,
either on~$[0,\infty)$ for the decreasing
rearrangement, or on some bounded interval~$[0,a]$ for the increasing
rearrangement. In such case, $\Sigma$ denotes the corresponding Borel
$\sigma$-algebra. In fact, one may take $a:=\|b\|_{\infty}$.
    The corresponding analysis extends to measures~$\mu\ll \lambda$
    with density~$\frac{d\mu}{d\lambda}$ which obey~$0 < \underline c
\leq \frac{d\mu}{d\lambda} \leq \bar C < \infty$. If this is the case
then it is easily seen that the increasing rearrangements~$b_\mu^\ast$
and~$b_\lambda^\ast$ of~$b$ corresponding to~$\mu$ and~$\lambda$,
respectively, satisfy
$$
b_\mu^\ast(\underline c t) \leq b_\lambda^\ast(t) \leq b_\mu^\ast(\bar
C t),\quad t>0,
$$
Similar argumants apply to the decreasing rearrangement. Thus, the
asymptotic results as these will be established for the Lebesgue
measure~$\lambda$ find their counterparts for other measures~$\mu$.

The first observation concerns the decreasing
rearrangement. 
For $M>0$ we assign the truncated
function~$\tilde b_{M}(t) := b(t) \chi_{(M,\infty)}(t)$, and the
shifted (to zero) version~$b_{M}(t) := \tilde b_{M}(t+M)$.

\begin{prop}
  We have that
$$ \lr{b_{M}}_{\ast}(t) = \lr{\tilde b_{M}}_{\ast}(t)
\leq \bast(t),\quad t>0.
  $$
\end{prop}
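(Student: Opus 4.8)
The plan is to prove the statement in two independent pieces: the equality $\lr{b_{M}}_{\ast} = \lr{\tilde b_{M}}_{\ast}$ and the inequality $\lr{\tilde b_{M}}_{\ast} \leq \bast$. Both reduce to standard properties of the decreasing rearrangement, namely that it depends only on the distribution function $d_b$ (so that equimeasurable functions share their rearrangement), and that it is monotone with respect to the pointwise partial order on nonnegative functions. I would recall these two facts at the outset, referring to the calculus of rearrangements in~\cite[Chapt.~2]{MR928802}.

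For the equality, the key observation is that $b_{M}$ arises from $\tilde b_{M}$ by the left-translation $t \mapsto t+M$, and that Lebesgue measure is translation invariant. First I would compute the distribution function of $\tilde b_{M}$: since $\tilde b_{M}$ vanishes on $[0,M]$ and equals $b$ on $(M,\infty)$, for every $\tau>0$ one has $d_{\tilde b_{M}}(\tau) = \lambda\set{t>M:\, b(t)>\tau}$. Then for $b_{M}(t) = b(t+M)$ the substitution $s = t+M$ together with translation invariance yields $d_{b_{M}}(\tau) = \lambda\set{s>M:\, b(s)>\tau} = d_{\tilde b_{M}}(\tau)$ for every $\tau>0$. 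Since the decreasing rearrangement is a functional of the distribution function only, the two rearrangements coincide.

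For the inequality I would exploit the pointwise bound $\tilde b_{M} = b\,\chi_{(M,\infty)} \leq b$, which holds because $b\geq 0$. Consequently $\set{t:\, \tilde b_{M}(t)>\tau} \subseteq \set{t:\, b(t)>\tau}$, so $d_{\tilde b_{M}}(\tau) \leq d_{b}(\tau)$ for all $\tau>0$. Feeding this into the definition $u_{\ast}(t) = \inf\set{\tau>0:\, d_{u}(\tau)\leq t}$ shows that the admissible set $\set{\tau>0:\, d_{\tilde b_{M}}(\tau)\leq t}$ contains the admissible set $\set{\tau>0:\, d_{b}(\tau)\leq t}$, whence the infimum can only decrease and $\lr{\tilde b_{M}}_{\ast}(t) \leq \bast(t)$ for every $t>0$.

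I do not expect a genuine obstacle here; the argument is direct bookkeeping with distribution functions. The only point that requires a little care is to confirm, under Assumption~\ref{ass:distfunction}, that all distribution functions involved are finite for every $\tau>0$ — which is inherited from $d_b$, since truncation and translation can only shrink or preserve the measure of the super-level sets — so that the rearrangements are well defined on $(0,\infty)$ and the manipulations above are legitimate.
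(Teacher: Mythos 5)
Your proof is correct and follows the same route as the paper: the equality via translation invariance of Lebesgue measure (shown through equality of distribution functions), and the inequality via the order-preserving property of the decreasing rearrangement applied to the pointwise bound $\tilde b_{M}\leq b$. The paper states these two facts in a single sentence; you have simply made the bookkeeping with distribution functions explicit, which is a faithful elaboration rather than a different argument.
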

\begin{proof}
The equality   is a result of the translation invariance of the
  Lebesgue measure, and the inequality
  follows from the fact
  that the decreasing rearrangement is order-preserving.
\end{proof}
Thus, the decreasing rearrangement does not take into account any
zeros which are present on bounded domains. Only the behavior at
infinity is reflected.

For the increasing rearrangement we shall follow a constructive
approach. Here we shall assume that the function~$b$ is piece-wise
continuous and has finitely many zeros. Specifically, we assume a
representation
\begin{equation}
  \label{eq:b-representation}
  b(s) = 
  \sum_{j\in A^{+}} b_{j}(s- s_{j}) + \sum_{j\in A^{-}}
  b_{j}(s_{j} -s),\quad 0 \leq s \leq a,
\end{equation}
where
\begin{enumerate}
\item[(i)]\label{it:zeros} the reals~$s_{j}\,(j=1,\dots, m)$ are the
  (distinct) locations of the zeros,
\item[(ii)]\label{it:bj} for each~$j=1,\dots,m$ we have that~$b_{j}(0)=0$,
  and there is a neighborhood of
  zero~$[0,a_{j})$ such that
  \begin{itemize}
  \item $b_{j}\colon [0,a_{j}]\to \real^{+},\
    j=1,\dots,m$ is \emph{continuous and strictly increasing}. 
  \end{itemize}
 The function~$\bar b$ satisfies~$\essinf \bar b>0$.
\item[(iii)]\label{it:disjoint} The set~$A= \set{1,\dots,m} = A^{+}\sqcup
  A^{-}$ is decomposed into two disjoint
  subsets~$A^{+}$ and~$A^{-}$, possible empty, and
\item[(iv)]\label{it:domination} there is one function, say~$b_{k}$ such that its
  inverse~$b_{k}^{-1}$ dominates\footnote{We say that a (non-negative) function~$g$
    dominates~$f$, and write $f \preceq g$, if there are a neighborhood~$[0,\varepsilon)$ and a
    constant~$k>0$ such that~$f(t) \leq k g(t),\ 0 \leq t \leq
    \varepsilon$.}  all other functions~$b_{j}^{-1}$, i.e.,\
  $b_{j}^{-1} \preceq b_{k}^{-1}$.
\end{enumerate}
Thus, the function~$b$ is a superposition of a function bounded away
from zero, and of increasing and
decreasing parts. We also stress that domination as in
item~(iv) does not extend from functions~$f^{-1},g^{-1}$ to the
inverse functions~$f,g$ unless additional assumptions are made, we
refer to~\cite{MR2442065} for a discussion.

Under the above assumptions we state the following result.
\begin{prop}
  Let the function~$b$ be as in the equation~(\ref{eq:b-representation}),
    and let~$b_{k}$ be the function from item~(iv) above.
  Then there is a
  constant~$C\geq 1$ such that
  $$
  b_{k}(s) = \lr{b_{k}}^{\ast}(s) \geq \boast(s) \geq
  \lr{b_{k}}^{\ast}\lr{\frac{s}{C m}} = b_{k}\lr{\frac{s}{C
      m}},
  $$
  for sufficiently small~$s> 0$.
\end{prop}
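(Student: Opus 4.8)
The plan is to route everything through the sublevel distribution function $D(\tau):=\lambda\lr{\set{s:\, b(s)\le\tau}}$, recalling that by definition $\boast(s)=\sup\set{\tau:\, D(\tau)\le s}$. I would first dispose of the two outer equalities. Each $b_{j}$ is continuous and strictly increasing on $[0,a_{j}]$ with $b_{j}(0)=0$, so $\lambda\lr{\set{s:\, b_{j}(s)\le\tau}}=b_{j}^{-1}(\tau)$, and therefore $\lr{b_{j}}^{\ast}(s)=\sup\set{\tau:\, b_{j}^{-1}(\tau)\le s}=b_{j}(s)$. Taking $j=k$ gives $\lr{b_{k}}^{\ast}=b_{k}$, which is exactly what the first and last members of the asserted chain claim.

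The heart of the argument, and the step I expect to be the main obstacle, is the evaluation of $D(\tau)$ for small $\tau$. Since $b$ has only the finitely many zeros $s_{1},\dots,s_{m}$ while the background part $\bar b$ is bounded away from zero, for all sufficiently small $\tau$ the set $\set{s:\, b(s)\le\tau}$ must collapse onto $m$ disjoint intervals, one abutting each $s_{j}$. On the interval attached to $s_{j}$ the function $b$ coincides with $b_{j}(\,\cdot-s_{j})$ when $j\in A^{+}$, respectively $b_{j}(s_{j}-\,\cdot)$ when $j\in A^{-}$, so that this interval has length exactly $b_{j}^{-1}(\tau)$; hence $D(\tau)=\sum_{j=1}^{m}b_{j}^{-1}(\tau)$ near zero. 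The delicate point is to verify rigorously that the mutual interaction of the summands in \eqref{eq:b-representation} does not perturb this one-sided local profile, and that the neighbouring non-vanishing pieces together with $\bar b$ really do keep $\set{s:\, b(s)\le\tau}$ away from everything but small neighbourhoods of the zeros. Granting this, retaining only the $k$-th term gives $D(\tau)\ge b_{k}^{-1}(\tau)$, while the domination hypothesis $b_{j}^{-1}\preceq b_{k}^{-1}$ supplies constants $k_{j}$ with $b_{j}^{-1}(\tau)\le k_{j}\,b_{k}^{-1}(\tau)$ near zero; with $C:=\max_{j}k_{j}$, which satisfies $C\ge1$ since $k_{k}=1$ is admissible, one obtains the matching bound $D(\tau)\le Cm\,b_{k}^{-1}(\tau)$.

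Finally I would feed the two bounds straight into the supremum. For the upper estimate, $D(\tau)\le s$ together with $b_{k}^{-1}(\tau)\le D(\tau)$ forces $b_{k}^{-1}(\tau)\le s$, i.e.\ $\tau\le b_{k}(s)$, so the supremum defining $\boast(s)$ cannot exceed $b_{k}(s)$. For the lower estimate, any $\tau\le b_{k}\lr{s/(Cm)}$ satisfies $b_{k}^{-1}(\tau)\le s/(Cm)$ and hence $D(\tau)\le Cm\,b_{k}^{-1}(\tau)\le s$, placing such $\tau$ in the set over which the supremum is taken, so that $\boast(s)\ge b_{k}\lr{s/(Cm)}$. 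Combining, $b_{k}(s)\ge\boast(s)\ge b_{k}\lr{s/(Cm)}$ for all sufficiently small $s>0$, which is the assertion.
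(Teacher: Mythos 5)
Your argument is correct and follows essentially the same route as the paper's proof: you identify the sublevel distribution function, show it equals $\sum_{j}b_{j}^{-1}(\tau)$ for small $\tau$ because the sublevel sets split into disjoint intervals around the zeros, invoke the domination hypothesis to sandwich this sum between $b_{k}^{-1}(\tau)$ and $Cm\,b_{k}^{-1}(\tau)$, and then pass to the supremum defining $\boast$. Your write-up is in fact somewhat more careful than the paper's, both in spelling out the final supremum step and in explicitly flagging the (glossed-over) point that the interaction of the summands and the background term $\bar b$ does not distort the local one-sided profiles near the zeros.
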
 
\begin{proof}
  Clearly, the function~$b_{k}$ is increasing near zero, such
  that it coincides with its increasing rearrangement, which explains
  the outer equalities.

  To establish the inner inequalities we argue as follows.
  Recall that we need to control
  \begin{align*}
    \lambda(b\leq \tau) = \lambda\lr{ \bar b(s) + \sum_{j\in A^{+}} b_{j}(s- s_{j}) + \sum_{j\in A^{-}}
  b_{j}(s_{j} -s) \leq \tau}.
  \end{align*}
  If~$\tau>0$ is small enough then, by item~(ii) the
  contribution of~$\bar b$ is neglected, and the sub-level sets $\{b_{j}\leq
  \tau\}\,(j=1,\dots,m)$ are disjoint intervals, which in turn yields
  \begin{align*}
   \lambda(b\leq \tau) & = \sum_{j\in A^{+}} \lambda(b_{j}(s- s_{j})
    \leq \tau) +  \sum_{j\in A^{-}} \lambda(b_{j}(s_{j} - s)    \leq
                         \tau) \\
    & = \sum_{j\in A^{+}} b_{j}^{-1}(\tau) + \sum_{j\in A^{-}}
      b_{j}^{-1}(\tau)\\
    &= \sum_{j\in A}b_{j}^{-1}(\tau).
  \end{align*}
  By the domination assumption from item~(iv) we find
  a constant~$C\geq 1$ such that
  \begin{equation}
    \label{eq:sum-bk}
    b_{k}^{-1}(\tau) \leq \sum_{j\in A}b_{j}^{-1}(\tau) \leq C m b_{k}^{-1}(\tau).
  \end{equation}
  Now, asking for the sup over all~$\tau>0$ such that~$\lambda(b\leq
  \tau)\leq s$ we find that
$$
b_{k}(s) \geq \boast(s) \geq
b_{k}\lr{\frac{s}{mC}},
$$
  for sufficiently small~$s> 0$.
This completes the proof.
\end{proof}
The above proposition asserts (heuristically)  that the part in the
decomposition of~$b$ in~(\ref{eq:b-representation}) which has the
highest order zero determines the asymptotics of the increasing rearrangement.

\subsection{Solution smoothness}
\label{sec:smoothness}

In order to quantify the error bounds, we need to specify the way in
which the solution smoothness will be expressed. This is given in
terms of source conditions based on index functions.
Here, and throughout, by an \emph{index function}, we mean a
strictly increasing continuous function $\varphi: (0, \infty)\to [0,
\infty)$ such that $\lim_{t \to +0}\varphi(t)=0$.
  \begin{de}[source condition]\label{de:source-condition}
    A function~$f\in\Lmu$ obeys a source condition with respect to the
    index function~$\varphi$ and the multiplier function~$b$ if
    $$
    f(s) = [\varphi(b)](s) v(s) := \varphi(b(s)) v(s),\quad s\in
    S,\quad \mu-a.e.
    $$
    with~$\norm{v}{\Lmu}\leq 1$.
  \end{de}
  \begin{rem}
Here, we briefly discuss the meaning of source conditions as
given in Definition~\ref{de:source-condition}. Let the operator~$A$
and the multiplication operator $M_{b}$ be related as described in the introduction. Then it is clear from the
relation between the noisy data representations~(\ref{eq:ill-posed})
and~(\ref{eq:ill-posed-multiplier}), that a source condition
$f= Ux = \varphi(b) v,\ \norm{v}{\Lmu}\leq 1$ yields the
representation~$x = \varphi(A) w$ with~$w:= U^{-1}v\in H$
and~$\norm{w}{H} = \norm{v}{\Lmu}\leq 1$. This is the standard form of
general smoothness in terms of source conditions, given with respect to
the forward operator~$A$ from~(\ref{eq:ill-posed}),
see~\cite{MR1984890,MR2521497}.
\end{rem}

  It is interesting to relate this to `classical' smoothness in the
  sense of Hilbertian Sobolev spaces $H^p(S,\mu)$. Precisely,
for smoothness parameter~$p>0$ we let~$H^{p}(S,\mu)$ be the Hilbert space of all functions~$f\colon
S \to \real$ such that
$$
\norm{f}{p} := \left(\int_{S} \abs{f(s)}^{2} \lr{1 + \abs{s}^{2}}^{p} \;
d\mu(s)\right)^{1/2} < \infty.
$$
The question is, under which conditions this type of smoothness can be expressed in
terms of source conditions as in Definition~\ref{de:source-condition}.
We start with the following technical result.
\begin{lem}\label{lem:phast}
  Suppose that~$\mu(S)=\infty$ and that the function~$b$ vanishes
  at infinity (cf.~Assumption~\ref{ass:distfunction}). Moreover, let
  there exist positive constants $M<\infty$ and $c>0$ such that
  $$b(s)\geq c, \quad \mbox{for} \quad \abs{s}\leq M$$
  and
    \begin{equation}
    \label{eq:mu-condition}
    \mu\lr{\set{x,\ b(x) >b(s)}}\asymp \abs{s},\quad \text{for}\
    \abs{s} >M.
  \end{equation}
  Then the function~$\phast$, given for sufficiently small $t>0$ as
\begin{equation}
  \label{eq:phast}
  \phast(t):= \frac{1}{\mu\lr{\set{x,\ b(x) > t}}},
\end{equation}
constitutes an index function. Moreover, we have
the asymptotics
\begin{equation}
  \label{eq:phast-asymptotics}
  \phast(b(s)) \asymp \frac 1 {\abs{s}}\quad \mbox{as} \quad  \abs{s}\to \infty.
\end{equation}
\end{lem}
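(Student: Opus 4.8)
The plan is to work throughout with the distribution function $d_{b}(t)=\m\set{s\in S:\ b(s)>t}$ introduced above, since by definition $\phast(t)=1/d_{b}(t)$. The argument then splits into verifying the defining properties of an index function near zero and reading off the asymptotics directly from~\eqref{eq:mu-condition}.

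First I would settle that $\phast$ is well defined, positive, and finite for small $t>0$. Assumption~\ref{ass:distfunction} gives $d_{b}(t)<\infty$ for every $t>0$, while the standing hypothesis $b>0$ $\m$-a.e.\ together with $\m(S)=\infty$ gives $d_{b}(t)>0$ for small $t$; hence $\phast(t)\in(0,\infty)$. Monotonicity is immediate: $d_{b}$ is non-increasing in $t$, so $\phast=1/d_{b}$ is non-decreasing. For the behaviour at zero I would use continuity of $\m$ from below: as $t\downarrow 0$ the sets $\set{b>t}$ increase to $\set{b>0}$, whence $d_{b}(t)\uparrow\m\set{b>0}=\m(S)=\infty$ and therefore $\phast(t)\to 0$ as $t\to+0$, giving the required normalisation.

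The asymptotics~\eqref{eq:phast-asymptotics} then come essentially for free: substituting $t=b(s)$ into~\eqref{eq:phast} yields $\phast(b(s))=1/\m\set{x:\ b(x)>b(s)}$, and \eqref{eq:mu-condition} states precisely that the denominator is $\asymp\abs{s}$ for $\abs{s}>M$. One point I would record along the way is that this forces $b(s)\to 0$ as $\abs{s}\to\infty$ (otherwise $d_{b}(b(s))$ would stay bounded by some $d_{b}(\e)<\infty$, contradicting $d_{b}(b(s))\asymp\abs{s}\to\infty$), so that the values $t=b(s)$ are genuinely small and $\phast$ is evaluated where it is defined.

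The delicate point, and the main obstacle, is upgrading the non-decreasing $\phast$ to a strictly increasing, continuous index function on all of $(0,\infty)$. Continuity of $\phast$ is equivalent to continuity of $d_{b}$, which fails exactly when $b$ takes some positive value on a set of positive $\m$-measure (an atom of the level distribution); strict monotonicity near zero is equivalent to $d_{b}$ being strictly decreasing there, i.e.\ to the essential range of $b$ being dense down to $0$. Neither is guaranteed by~\eqref{eq:mu-condition} alone, since $\asymp$ only controls two-sided bounds. I would close this gap in one of two ways: either by invoking the structural regularity of $b$ (piecewise continuity together with the monotone decay to zero encoded in~\eqref{eq:mu-condition}, which rules out both positive-measure level sets and gaps in the range near $0$, so that $d_{b}$ is continuous and strictly decreasing), or---since in the subsequent error analysis index functions enter only through source conditions and are used up to multiplicative constants---by replacing $\phast$ with an equivalent genuine index function $\tilde\varphi\asymp\phast$, obtained as a continuous, strictly increasing minorant and extended to $(0,\infty)$ in the routine way. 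Either route preserves both the normalisation at zero and the asymptotics~\eqref{eq:phast-asymptotics}.
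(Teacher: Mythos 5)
The paper contains no proof of this lemma at all: the \verb|proof| environment that follows it belongs to the subsequent Corollary, so there is nothing to compare your route against and your argument has to stand on its own. Its positive parts do: finiteness of $d_b(t)=\mu\{s:\,b(s)>t\}$ for $t>0$ is exactly Assumption~\ref{ass:distfunction}; continuity of $\mu$ from below gives $d_b(t)\uparrow\mu\{b>0\}=\mu(S)=\infty$ as $t\downarrow 0$, hence $\phast(t)\to 0$; monotonicity of $d_b$ gives monotonicity of $\phast$; and~(\ref{eq:phast-asymptotics}) is literally~(\ref{eq:mu-condition}) evaluated at $t=b(s)$. Your side remark that~(\ref{eq:mu-condition}) forces $b(s)\to 0$ as $\abs{s}\to\infty$, so that $\phast$ is indeed being evaluated at small arguments, is a worthwhile sanity check.

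The obstruction you flag is genuine, and it is a defect of the lemma as stated rather than of your argument: condition~(\ref{eq:mu-condition}) constrains $d_b$ only at points of the essential range of $b$, so it can exclude neither level sets of positive measure (which make $d_b$, hence $\phast$, jump) nor gaps in the essential range near $0$ (which make $\phast$ locally constant). A concrete counterexample: on $S=[0,\infty)$ with Lebesgue measure take $b(s)=1$ for $s\le 1$ and $b(s)=1/\lceil s\rceil$ for $s>1$; then $\mu\{x:\,b(x)>b(s)\}=\lceil s\rceil-1\asymp s$ for $s>1$, so all hypotheses hold with $M=c=1$, yet $\phast=1/d_b$ is a step function, hence neither continuous nor strictly increasing. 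Of your two proposed repairs, the second --- replacing $\phast$ by an equivalent genuine index function $\tilde\varphi\asymp\phast$ --- is the one consistent with how the lemma is actually used: the Corollary that follows needs only the two-sided bounds~(\ref{eq:lower-upper-bound}), which survive such a replacement, and the paper itself already hedges by speaking of a source condition with respect to ``a multiple of'' $\phast^{p}$. If you instead impose structural regularity on $b$ (your first repair), state it explicitly, since nothing in the lemma's hypotheses currently supplies it.
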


\begin{xmpl}[power-type decay on $[0,\infty)$]
    For $\kappa>0$ we consider the
functions $f \in L^2([0,\infty),\mu)$ with Lebesgue measure $\mu$ defined as
  $$
  b(s):=\frac{1}{1+s^{1/\kappa}}, \qquad 0 \le s < \infty.
  $$
  Then the assumptions of Lemma~\ref{lem:phast} are fulfilled, and for
  sufficiently small $t>0$, we have in this case 
$$\mu\lr{\set{x,\ b(x) > t}}=\left(\frac{1-t}{t}\right)^{\kappa}$$
and hence $\phast(t) \asymp t^{\kappa}$ as $t \to +0$.
\end{xmpl}

\begin{cor}
  Under the assumptions of Lemma~\ref{lem:phast} consider the
  function~$\phast$ as in~(\ref{eq:phast}).
  The function~$f$ belongs to~$H^{p}(S,\mu)$ if and only if it obeys a
  source condition with respect to (a multiple of) the
  function~$\phast^{p}(t),\ t>0$.
\end{cor}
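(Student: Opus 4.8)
The plan is to reduce both the Sobolev membership and the source condition to weighted $L^{2}$-integrability conditions, and then to show that the two resulting weights are pointwise comparable $\mu$-a.e. First I would rewrite the source condition: since $b>0$ $\mu$-a.e.\ and $\phast$ is an index function (hence positive on $(0,\infty)$), the factor $\phast^{p}(b(s))$ is positive $\mu$-a.e. Therefore the requirement that $f = c\,\phast^{p}(b)\,v$ for some constant $c>0$ and some $v$ with $\norm{v}{\Lmu}\le 1$ is equivalent to $f/\phast^{p}(b)\in\Lmu$, i.e.\ to
$$
\int_{S} \abs{f(s)}^{2}\,\phast^{-2p}(b(s))\,d\mu(s) < \infty,
$$
the smallest admissible constant being $c = \norm{f/\phast^{p}(b)}{\Lmu}$. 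On the other hand, by definition $f\in H^{p}(S,\mu)$ is precisely the condition $\int_{S} \abs{f(s)}^{2}\,(1+\abs{s}^{2})^{p}\,d\mu(s)<\infty$. Hence the corollary follows once I establish that the weights $w_{1}(s):=\phast^{-2p}(b(s))$ and $w_{2}(s):=(1+\abs{s}^{2})^{p}$ satisfy $w_{1}\asymp w_{2}$ $\mu$-a.e.

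To prove $w_{1}\asymp w_{2}$ I would split $S$ into the bounded region $\abs{s}\le M$ and its complement. For $\abs{s}\le M$ the hypothesis $b(s)\ge c$ confines $b(s)$ to $[c,\overline b]$, on which $\phast$ is bounded above and below by positive constants; consequently $w_{1}(s)$ lies between two positive constants, and so does $w_{2}(s)\in[1,(1+M^{2})^{p}]$, so the two weights are comparable there. For $\abs{s}>M$ I would invoke the asymptotics~\eqref{eq:phast-asymptotics} of Lemma~\ref{lem:phast}, namely $\phast(b(s))\asymp 1/\abs{s}$; raising to the power $-2p$ gives $w_{1}(s)=\phast^{-2p}(b(s))\asymp \abs{s}^{2p}$, while plainly $w_{2}(s)=(1+\abs{s}^{2})^{p}\asymp\abs{s}^{2p}$ for $\abs{s}>M$. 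Combining the two regions yields constants $0<c_{1}\le c_{2}<\infty$ with $c_{1} w_{2}\le w_{1}\le c_{2} w_{2}$ $\mu$-a.e.

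Finally, the comparability of the weights shows that the two integrals above are finite or infinite simultaneously, which is exactly the asserted equivalence between $f\in H^{p}(S,\mu)$ and $f$ obeying a source condition with respect to a multiple of $\phast^{p}$. The main obstacle I anticipate lies on the bounded region: one must ensure that $\phast$ is genuinely well defined, positive and finite on the range $[c,\overline b]$ of $b$ there (the Lemma asserts the index-function property only for small $t$), so that $w_{1}$ is bounded away from $0$ and $\infty$. This is where the hypothesis $b\ge c$ on $\abs{s}\le M$, together with $b\le\overline b$, enters, and it is also the reason the statement is phrased with respect to \emph{a multiple} of $\phast^{p}$: the multiplicative constant absorbs precisely the constants $c_{1},c_{2}$ produced by the weight comparison.
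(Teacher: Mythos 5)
Your proposal is correct and follows essentially the same route as the paper: both arguments reduce the equivalence to the two-sided comparability of the weights $(1+\abs{s}^{2})^{p}$ and $\phast^{-2p}(b(s))$, established by splitting $S$ at $\abs{s}=M$ and invoking the asymptotics $\phast(b(s))\asymp 1/\abs{s}$ from Lemma~\ref{lem:phast} on the unbounded part and the bound $b\ge c$ on the bounded part. Your explicit remark about $\phast$ being well defined and bounded away from $0$ and $\infty$ on $[c,\overline b]$ is a point the paper passes over with ``this is easily seen,'' so flagging it is a small plus rather than a deviation.
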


\begin{proof}
  Under the assumptions of Lemma~\ref{lem:phast} for the
  function~$b$  there are constants~$0 < c < C <
  \infty$ such that
  \begin{equation}
    \label{eq:lower-upper-bound}
    c \leq \inf_{s\in S} \lr{1 + \abs{s}^{2}} \phast^{2}\lr{b(s)}
    \leq \sup_{s\in S} \lr{1 + \abs{s}^{2}} \phast^{2}\lr{b(s)} \leq C.
  \end{equation}
  This is easily seen for~$\abs{s} \leq M$, as given in
  Lemma~\ref{lem:phast}. For~$\abs{s} > M$ we see that
  $$
  \lr{1 + \abs{s}^{2}} \phast^{2}\lr{b(s)} \asymp \lr{1 +
    \abs{s}^{2}} \abs{s}^{-2}.
  $$
  But for~$\abs{s}\geq M$ we have that~$1 \leq \lr{1 +
    \abs{s}^{2}} \abs{s}^{-2} \leq \lr{1 +
    M^{2}} M^{-2}$, where the right hand side bound follows from the
  monotonicity of ~$x\mapsto (1+x)/x,\ x>0$, and this
  proves~(\ref{eq:lower-upper-bound}).
 Now, suppose that~$f\in  H^{p}(S,\mu)$. Consider the element~$w(s):=
 \frac{f(s)}{\phast^{p}(b(s))}$, where~$\phast$ is as above. It is
 enough to show that~$w\in L^{2}(S,\mu)$, i.e.,\ that it serves as a
 source element. We have that
 \begin{align*}
\int_{S}\abs{w(s)}^{2}\, \mu(s) & \leq  \int \abs{f(s)}^{2} \lr{1 +
                                  \abs{s}^{2}}^{p}\, d\mu(s) \sup_{s\in S} \frac
                                  1 {\lr{1 +
                                  \abs{s}^{2}}^{p}\abs{\phast^{p}(b(s))}^{2}}
   \\
   &= \norm{f}{p}^{2} \sup_{s\in S} \frac
                                  1 {\left[\lr{1 +
                                  \abs{s}^{2}}\abs{\phast^{2}(b(s))}\right]^{p}},
 \end{align*}
 and the latter is finite by~(\ref{eq:lower-upper-bound}). On the
 other hand, under a source condition for~$f$ we can bound
 \begin{align*}
   \int_{S}\abs{f(s)}^{2}\lr{1 + \abs{s}^{2}}^{p}\,d\mu(s) &\leq
\norm{w}{\Lmu}\sup_{s\in S} \left[\lr{1 + \abs{s}^{2}} \abs{\phast^{2}(b(s))}\right]^{p},
 \end{align*}
 where the  supremum is again bounded
 by~(\ref{eq:lower-upper-bound}). The proof is complete.
\end{proof}

\subsection{Regularization}
\label{sec:regularization}

For reconstruction of $f(s),\, s\in S$, we shall use
\emph{regularization schemes} $\Pa \colon [0,\infty) \to \real^{+}$,
parametrized by $\alpha >0$, see e.g.~\cite{MR1984890}.

\begin{de}[regularization scheme]\label{de:regularization}
  A family~$(\Pa)$ of real valued Borel-measurable functions $\Pa(t),\ t\geq 0,
  \alpha>0$, is called a regularization if
 there are constants $C_{-1}>0$ and $C_0 \ge 1$ such that 
  \begin{enumerate}
  \item[(I)]\label{it:to0} for each~$t>0$ we have ~$ t \Pa(t) \to 1 $ as~$\alpha \to 0$,
  \item[(II)]\label{it:1alpha-estimate} $\abs{\Pa(t)}\leq
    \frac{C_{-1}}{\alpha}$ for $\alpha >0$, and
  \item[(III)]
the function $R_\alpha(t):= 1-t\Phi_\alpha(t)$, which is called a \emph{residual function}, satisfies ~$\abs{R_{\alpha}(t)}\leq C_{0}$ for all~$t\ge 0$ and $\alpha>0$.
\end{enumerate}
\end{de}

For the case of statistical noise, additional assumptions have to be
made. These will be introduced and discussed later.

We apply a regularization~$(\Pa)$ to a function~$b$ in the way
$$
[\Pa(b)](s) := [\Pa \circ b](s) = \Pa(b(s)),\quad s\in S.
$$

Having chosen a regularization~$\Pa$, and given data~$\gd$ we consider
the function
\begin{equation}
  \label{eq:appr-solution}
  \fad(s) := [\Pa(b)](s) \gd(s),\quad s\in S,
\end{equation}
or in short~$\fad := \Pa(b) \gd$, as a candidate for the approximate solution.

For the subsequent error analysis the following property of a
regularization proves important, again we refer to~\cite{MR1984890, NPT}.

\begin{de}
  [qualification]\label{de:qualification}
Let~$\varphi$ be any index function.
 A regularization~$(\Pa)$ is said to have qualification~$\varphi$ if
  there is a constant~$C_{\varphi}>0$ such that
  $$
  \sup_{t\geq 0}\abs{R_\a(t)} \varphi(t) \leq C_{\varphi} \varphi(\alpha),\quad \alpha>0.
  $$
\end{de}

\begin{xmpl}[spectral cut-off]\label{xmpl:cut-off-def}
  Let the regularization be given as
  $$
  \Pa^{\text{c-o}}(t) :=
  \begin{cases}
    \frac 1 t, & t > \alpha\\
    0, & else.
  \end{cases}
  $$
  This obeys the requirements of regularization with~$C_{-1}=1$. It
  has arbitrary qualification. That is, for any index function, the
  requirement in Definition \ref{de:qualification} will be satisfied. The
  corresponding residual function is~$\Ra = \chi_{\set{t: \ t \leq
      \alpha}}$, so that for a function $b$ on $S$,
   ~$\Ra(b)= \chi_{\set{s:\ b(s) \leq \alpha}}$. 
\end{xmpl}
\begin{xmpl}  [Lavrent'ev regularization]
  This method corresponds to the function
  $$
  \Pa(t) := \frac 1 {t + \alpha},\quad t \ge 0,\,\alpha>0.
  $$
  Lavrent'ev regularization is known to have at most `linear' qualification.
  More generally,
  index functions~$\varphi(t):= t^{\nu},\ t>0,$ are qualifications
  whenever the exponent~$\nu$ satisfies~$0< \nu \leq 1$.
\end{xmpl}

For infinite measures~$\mu$, and under white noise, it will be seen
that it is important that the regularization~$\lr{\Pa}$ will vanish
for small~$0 \le t\leq \alpha$ This is formalized in
\begin{ass}\label{ass:finite-condition}
 For each~$\alpha>0$ the function~$\Pa$ vanishes on the set~$\set{t \ge 0: \
   t\leq \alpha}$.
\end{ass}
This assumption holds true for spectral cut-off, but
it is not fulfilled for most other regularizations. However, we can
modify any regularization to obey Assumption~\ref{ass:finite-condition}.

\begin{lem}
  Let~$\lr{\Pa}$ be any regularization with constants~$C_{-1}$ and~$C_{0}$. Assign
  $$
  \tPa(t) :=   \chi_{(\alpha,\infty)}(t) \Pa(t),\quad t>0.
  $$
  Then~$\lr{\tPa}$ is a regularization scheme with same
  constants~$C_{-1}$ and~$C_{0}$.
  Moreover, an index function~$\varphi$ is a qualification
  of~$\lr{\Pa}$ if and only if it is a qualification of~$\lr{\tPa}$ with
constant~$\tilde C_{\varphi} =
  \max\set{C_{\varphi},C_{0}}$.
\end{lem}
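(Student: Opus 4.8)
The plan is to verify each of the three defining properties of a regularization for~$\tPa$ directly, exploiting that~$\tPa$ agrees with~$\Pa$ on~$\set{t>\alpha}$ and vanishes on~$\set{0\le t\le\alpha}$. For property~(I), fix~$t>0$: once~$\alpha<t$ we have~$\chi_{(\alpha,\infty)}(t)=1$, so~$\tPa(t)=\Pa(t)$ for all sufficiently small~$\alpha$, and the pointwise limit~$t\tPa(t)\to 1$ is inherited from~$(\Pa)$. Property~(II) is immediate from~$\abs{\tPa(t)}=\chi_{(\alpha,\infty)}(t)\abs{\Pa(t)}\le \abs{\Pa(t)}\le C_{-1}/\alpha$, with the same constant~$C_{-1}$.

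The key computation is property~(III), which also drives the qualification statement. I would write the associated residual~$\tRa(t) = 1 - t\tPa(t)$ explicitly in the two regimes: for~$t>\alpha$ it equals~$\Ra(t)$, while for~$0\le t\le\alpha$ it equals~$1$. Hence~$\abs{\tRa(t)}\le \max\set{C_{0},1} = C_{0}$, where I use the standing requirement~$C_{0}\ge 1$ from Definition~\ref{de:regularization}; this gives~(III) with the same constant~$C_{0}$ and completes the first assertion.

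For the qualification equivalence I would again split the supremum over~$\set{t>\alpha}$ and~$\set{0\le t\le\alpha}$ and use the monotonicity of the index function~$\varphi$, namely~$\sup_{0\le t\le\alpha}\varphi(t)=\varphi(\alpha)$ (with~$\varphi(0)$ read as~$\lim_{t\to+0}\varphi(t)=0$). Assuming~$\varphi$ is a qualification of~$(\Pa)$ with constant~$C_{\varphi}$, on~$\set{t>\alpha}$ we have~$\abs{\tRa}\varphi = \abs{\Ra}\varphi \le C_{\varphi}\varphi(\alpha)$, while on~$\set{0\le t\le\alpha}$ we have~$\abs{\tRa}\varphi = \varphi \le \varphi(\alpha)$; taking the maximum of the two partial suprema yields~$\sup_{t\ge 0}\abs{\tRa(t)}\varphi(t)\le \max\set{C_{\varphi},1}\varphi(\alpha)\le \tilde C_{\varphi}\varphi(\alpha)$ with~$\tilde C_{\varphi}=\max\set{C_{\varphi},C_{0}}$. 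For the converse, starting from a qualification of~$(\tPa)$, the bound on~$\set{t>\alpha}$ transfers verbatim since~$\Ra=\tRa$ there, and on~$\set{0\le t\le\alpha}$ I bound~$\abs{\Ra(t)}\varphi(t)\le C_{0}\varphi(\alpha)$ using property~(III) for the original scheme together with monotonicity of~$\varphi$; this recovers a qualification constant~$\max\set{\tilde C_{\varphi},C_{0}}$ for~$(\Pa)$, which is all the converse requires.

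There is no genuine obstacle here; the proof is essentially a case distinction at the threshold~$t=\alpha$. The only points requiring care are the appeal to~$C_{0}\ge 1$ to absorb the residual value~$1$ on the truncated region~$\set{0\le t\le\alpha}$, and the monotonicity of~$\varphi$ to evaluate its supremum on~$[0,\alpha]$ --- together with the harmless convention~$\varphi(0)=0$, so that the product~$\abs{\tRa(0)}\varphi(0)$ poses no problem even though~$\tRa(0)=1$.
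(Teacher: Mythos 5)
Your proof is correct and follows essentially the same route as the paper: the same case split at $t=\alpha$, the same explicit formula $\tRa(t)=\chi_{(\alpha,\infty)}(t)\Ra(t)+\chi_{(0,\alpha]}(t)$, and the same appeal to $C_{0}\ge 1$ and to the monotonicity of $\varphi$ on $[0,\alpha]$. If anything you are slightly more complete, since you spell out both directions of the qualification equivalence where the paper only treats one explicitly.
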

\begin{proof}
  We verify the properties. For~$t>\alpha$ the regularizations~$\tPa$
  and~$\Pa$ coincide, thus item~(I) holds true. Also,
  $\abs{\tPa(t)} \leq \abs{\Pa(t)}$, such that we can let~$\tilde
  C_{0}:= C_{0}$. Next, it is easy to check that~$\tRa(t) =
  \chi_{(a,\infty)}(t) \Ra(t) + \chi_{(0,a]}(t)$, which allows us to
  prove the second assertion, after recalling that~$C_{0}\geq 1$.

  Finally,  we bound~$\abs{\tRa(t)}\varphi(t)$. Plainly, if~$t \leq
  \alpha$ then
  $$
  \abs{\tRa(t)}\varphi(t)\leq C_{0} \varphi(\alpha).
  $$
  Otherwise, for~$t>\alpha$ both functions~$\tRa$ and~$\Ra$ coincide,
  This completes the proof.
\end{proof}
Therefore, we may tacitly assume that the regularization of choice is
accordingly modified to meet Assumption~\ref{ass:finite-condition}.

\section{Error analysis}
\label{sec:error}

As stated in the introduction, we shall discuss error bounds, both for
the classical setup of bounded deterministic noise, as well as for
statistical white noise, to be defined now.
\begin{de}
  [deterministic noise]\label{de:det-noise}
  The noise term~$\xi = \xi(s)$ is norm-bounded by one, i.e., \
  $\norm{\xi}{\Lmu}\leq 1$.
\end{de}
We shall occasionally adopt the notation~$\xi_s:=\xi(s)$.
\begin{de}
  [white noise]\label{de:white-noise}
  There is some probability space~$(\Omega,\mathcal F,P)$ such that the
  family~$\set{\xi_{s}}_{s\in S}$ constitutes a centered
  stochastic process\footnote{For each~$s\in S$ we have a random
    variable~$\xi_{s}\colon \Omega \to \real$.} with~$\E\xi_{s}=0$ for
  all~$s\in S$, and $\E\abs{\xi_{s}}^{2} =1,\ s\in S$.
\end{de}

We return to the noisy equation~(\ref{eq:base-multiplication}).
Writing (the unknown)~$  f_\a(s) := [\Pa(b)](s) g(s),\quad s\in S,$
by (\ref{eq:base-multiplication}) and (\ref{eq:appr-solution}), we obtain
\begin{eqnarray*}
f - \fad &=& [f - \Pa(b)(g)] - [\Pa(b)(g^\d) - \Pa(b)(g)]\\
&=& [f - \Pa(b)(bf)] - [\Pa(b)(g^\d) - \Pa(b)(g)]\\
& = &     [I - \Pa(b)b]f  -\delta \Pa(b) \xi.
\end{eqnarray*}
Thus, we have the decomposition of the  error of the reconstruction~$\fad$ in a natural
way, by using the residual function~$\Ra$, as
\begin{equation}
  \label{eq:error-decomposition}
  f - \fad = 
  \Ra(b) f -  \delta \Pa(b) \xi,
\end{equation}
The term~$\Ra(b) f $ is completely deterministic, the noise properties
are inherent in~$\Pa(b) \xi$, only.

\subsection{Bounding the bias}
\label{sec:bias-bound}
The  (noise-free) term~$\Ra(b)f$ in the
decomposition~(\ref{eq:error-decomposition}) gives rise to the bias,
defined as
$$
\bsubf(\alpha) := \norm{\Ra(b)f}{\Lmu},
$$
and it is called the \emph{profile
  function} in~\cite{MR2318806}.
We shall assume that the solution admits a
source condition as in Definition~\ref{de:source-condition}, and that
the chosen regularization has this as a qualification
so that
\begin{multline}
  \label{eq:triangle-bound-1}
  \norm{\Ra(b) f}{\Lmu} \leq \norm{\Ra(b)\varphi(b(s))v(s)}{\Lmu}\\
  \leq \norm{\Ra(b) \varphi(b(s))}{\infty}\norm{v}{\Lmu}\leq C_{\varphi}\varphi(\alpha).
\end{multline}
\medskip

We briefly highlight the case when~$\mu$ is a finite measure. It is to
be observed that
if ~$f\in L^{\infty}(S,\mu)$, then
\begin{equation}\label{eq:2infty}
\norm{\Ra(b) f}{\Lmu} \leq \norm{\Ra(b)}{\Lmu} \norm{f}{L^{\infty}(S,\mu)}.
\end{equation}
\begin{xmpl}
For Lavrent'ev regularization,
spectral cut-off, and with function~$b(s) := s^{\kappa},\ s>0$,
with~$\kappa>0$, we see that
\begin{align*}
  \norm{\Ra(b)}{\Lmu}^{2} =
  \begin{cases}
    \alpha^{2}\int (\alpha + s^{\kappa})^{-2}d\mu(s), & \text{for
      Lavrent'ev regularization}\\
    \mu\lr{\set{s:\ s^\kappa \leq \alpha}}, & \text{for spectral cut-off}.
  \end{cases}
\end{align*}
From this, we conclude that for Lavrent'ev regularization the bound
in~(\ref{eq:2infty}) is finite only
if~$\kappa >1/2$, whereas for spectral cut-off this holds for
all~$\kappa>0$. If $\mu$ is the Lebesgue measure $\lambda$ on $(0,1)$, then we
find that
$$\|\Ra(b)\|_{L^2((0,1),\Sigma,\lambda)} \leq C \alpha^{1/(2\kappa)}$$
in either case.

A similar
bound, relying on Tikhonov regularization was first given in~\cite[ Theorem~4.5]{MR1701353}.
\end{xmpl}

\subsection{Error under deterministic noise}
\label{sec:deterministic}
Although the focus of this study is on statistical ill-posed problems,
we briefly sketch the corresponding result for bounded deterministic
noise as introduced in Definition~\ref{de:det-noise}.
In this case we bound the error, starting from the
decomposition~(\ref{eq:error-decomposition}),  by using the triangle
inequality, for~$\alpha>0$ as
\begin{equation}
  \label{eq:triangle-bound}
  \norm{f - \fad}{\Lmu} \leq \norm{\Ra(b) f}{\Lmu} + \delta
  \norm{\Pa(b) \xi}{\Lmu}.
\end{equation}
Now, using the item~(2) in Definition \ref{de:regularization}, we obtain a bound for the noise term as
$$
\delta   \norm{\Pa(b) \xi}{\Lmu} \leq \delta \sup_{s\geq 0}
\abs{\Pa(b(s))} \leq  C_{-1}\frac\delta {\alpha},\quad \alpha>0.
$$
This together with the estimate (\ref{eq:triangle-bound-1}) for the noise free term gives the following
\begin{prop}\label{est-derministic}
  Suppose that the solution $f$ satisfies the source condition as in   Definition~\ref{de:source-condition}, and that  a
  regularization~$(\Pa)$ is chosen with  qualification~$\varphi$. Then
  $$
  \norm{f - \fad}{\Lmu} \leq C_{\varphi}\varphi(\alpha) +
  C_{-1}\frac\delta {\alpha},\quad \alpha>0.
  $$
  The \emph{a priori parameter choice}~$\alpha_{\ast}=
  \alpha_{\ast}(\varphi,\delta)$ from solving the equation
  \begin{equation}
    \label{eq:a-priori-choice}
    \alpha \varphi(\alpha) = \delta
  \end{equation}
  yields the error bound
  \begin{equation}
    \label{eq:a-priori-error}
   \norm{f - \fad}{\Lmu}  \leq 2\max\set{C_{\varphi},C_{-1}}\varphi(\alpha_{\ast}),
 \end{equation}
 uniformly for functions~$f$ which obey a source condition with
 respect to the index function~$\varphi$.
\end{prop}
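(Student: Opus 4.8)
The plan is simply to assemble the two bounds that have already been prepared in the text immediately preceding the statement. Starting from the error decomposition~\eqref{eq:error-decomposition} and the triangle inequality~\eqref{eq:triangle-bound}, the reconstruction error splits into a deterministic bias term $\norm{\Ra(b)f}{\Lmu}$ and a noise term $\delta\norm{\Pa(b)\xi}{\Lmu}$. First I would invoke the bias estimate~\eqref{eq:triangle-bound-1}, which combines the source condition of Definition~\ref{de:source-condition} with the qualification property of Definition~\ref{de:qualification}, to obtain $\norm{\Ra(b)f}{\Lmu}\le C_{\varphi}\varphi(\alpha)$. For the noise term I would use property~(II) of Definition~\ref{de:regularization} together with the deterministic bound $\norm{\xi}{\Lmu}\le 1$, giving $\delta\norm{\Pa(b)\xi}{\Lmu}\le C_{-1}\delta/\alpha$. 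Adding the two yields the first displayed inequality, valid for every $\alpha>0$.

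Next I would carry out the a priori optimization over $\alpha$. Since $\varphi$ is an index function, the map $\alpha\mapsto\alpha\varphi(\alpha)$ is continuous and strictly increasing on $(0,\infty)$ and tends to zero as $\alpha\to+0$; hence for every sufficiently small $\delta>0$ the equation~\eqref{eq:a-priori-choice} admits a unique solution $\alpha_{\ast}=\alpha_{\ast}(\varphi,\delta)$. At this value the two terms balance, $\delta/\alpha_{\ast}=\varphi(\alpha_{\ast})$, so substitution into the first inequality gives
\[
\norm{f-\fad}{\Lmu}\le C_{\varphi}\varphi(\alpha_{\ast})+C_{-1}\varphi(\alpha_{\ast})=\lr{C_{\varphi}+C_{-1}}\varphi(\alpha_{\ast}).
\]
Bounding $C_{\varphi}+C_{-1}\le 2\max\set{C_{\varphi},C_{-1}}$ then produces the claimed bound~\eqref{eq:a-priori-error}.

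This argument is essentially bookkeeping, since all the analytic content is already contained in the preparatory estimates; I do not expect any serious obstacle. The only point that genuinely requires a word of care is the existence and uniqueness of the parameter $\alpha_{\ast}$ solving~\eqref{eq:a-priori-choice}, which I would justify purely from the monotonicity and continuity built into the definition of an index function, remarking that it holds at least for those $\delta$ lying in the range of $\alpha\mapsto\alpha\varphi(\alpha)$. Finally, the uniformity over the source class is immediate: neither the constants $C_{\varphi},C_{-1}$ nor the balancing value $\alpha_{\ast}$ depends on the particular source element $v$ with $\norm{v}{\Lmu}\le 1$, so the final estimate holds simultaneously for all $f$ obeying a source condition with respect to $\varphi$.
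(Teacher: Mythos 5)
Your proposal is correct and follows exactly the route the paper takes: the proposition is stated as an immediate consequence of the triangle inequality \eqref{eq:triangle-bound}, the bias estimate \eqref{eq:triangle-bound-1} via the qualification, and the noise bound from property~(II) of Definition~\ref{de:regularization}, with the balancing $\delta/\alpha_{\ast}=\varphi(\alpha_{\ast})$ giving \eqref{eq:a-priori-error}. Your extra remark on the existence and uniqueness of $\alpha_{\ast}$ from the monotonicity and continuity of $\alpha\mapsto\alpha\varphi(\alpha)$ is a sensible addition that the paper leaves implicit.
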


\subsection{Error under white noise}
\label{sec:white-noise}

Here we assume that the underlying noise is as in
 Definition~\ref{de:white-noise}. Thus, since $\xi$ is a random variable, it is a function of $\omega\in \Omega$ so that $f_\a^\d$ also a function of $\omega\in \Omega$. Hence, for each fixed $\omega\in \O$, from~(\ref{eq:error-decomposition}) we obtain
\begin{eqnarray}
\norm{f - \fad(\omega)}{\Lmu}^{2}  &=&  \norm{\Ra(b) f}{\Lmu}^{2} +
2 \delta\scalar{\Ra(b) f}{\Pa(b) \xi}\label{eq:err-deco-white-noise}\\
&& + \delta^{2}\int_{S}\abs{\Pa(b(s))}^2\abs{\xi_{s}(\omega)}^{2}\;d\mu(s).\nonumber
\end{eqnarray}
The error of the regularization~$\Pa$ under white noise is
measured in RMS sense, that is,\ it is defined as
\begin{equation}
  \label{eq:rms-error}
  e(f,\Pa,\delta)^{2} := \E\norm{f - \fad}{\Lmu}^{2},
\end{equation}
where the expectation is with respect to the probability~$P$ governing
the noise process.
From the properties of the noise we deduce
from~(\ref{eq:err-deco-white-noise}) the \emph{bias--variance decomposition}
\begin{equation}
  \label{eq:bias-variance-deco}
  \E\norm{f - \fad}{\Lmu}^{2} = \norm{\Ra(b) f}{\Lmu}^{2} + \delta^{2}
  \E \int_{S}\abs{\Pa(b(s))}\abs{\xi_{s}(\omega)}^{2}\;d\mu(s).
\end{equation}
The first summand above, the squared bias, is treated as
in~\S~\ref{sec:bias-bound}. It remains to bound the variance, that is,
the second summand in~(\ref{eq:bias-variance-deco}). By interchanging
expectation and integration we deduce that
\begin{align}
\E \int_{S}\abs{\Pa(b(s))}\abs{\xi_{s}(\omega)}^{2}\;d\mu(s)
  & = \int_{S}\abs{\Pa(b(s))}^{2}\E\abs{\xi_{s}(\omega)}^{2}\;d\mu(s) \label{eq:bias-variance-deco-1}\\
  &= \int_{S}\abs{\Pa(b(s))}^{2}\;d\mu(s) \nonumber
\end{align}
%
For the above identity it is important to have the right hand side
finite; that is, $\Pa\circ b\in \Lmu$.

In the subsequent analysis we shall distinguish the cases of finite
measure~$\mu$, i.e.,\ when~$\mu(S)<\infty$ and the infinite case~$\mu(S)=\infty$.

Plainly,
if the measure~$\mu$ is finite then we have
from Definition~\ref{de:regularization} the uniform bound
$$
\int_{S}\abs{\Pa(b(s))}^{2}\;d\mu(s) \leq
\frac{C_{-1}^{2}}{\alpha^{2}} \mu(S),\quad \alpha>0.
$$
Otherwise, this needs not be the
case as highlights the following
\begin{xmpl}
  \label{xmpl:counter}
Consider the multiplication operator
$$
g := b \cdot f,\quad f\in L_{2}(\real,\lambda),
$$
where
$$b(s) = \begin{cases}
  0, & s<0,\\
  s, & 0 \leq s\leq 1,\\
  1,& s>1,
\end{cases}
$$
with~$\lambda$ denoting the Lebesgue measure on~$\real$.

Let $(\Phi_\alpha)$ be an arbitrary regularization. From
Definition~\ref{de:regularization} we know that $\Pa(1) \to 1$
as~$\alpha\to 0$, and hence there is~$\alpha_{0}>0$ such that~$\Pa(1)
\geq 1/2$ for~$0 < \alpha \leq \alpha_{0}$. Therefore, for each~$s\geq
1$ and $0<\alpha \leq \alpha_0$ we have that~$\Pa(b(s))=\Pa(1) \geq 1/2$, and
$$ \int_\R \abs{\Pa(b(s))}^{2}\;d\l(s) \geq  \int_1^\b \abs{\Pa(b(s))}^{2}\;d\l(s) \geq  \frac{1}{4}(\b-1)
$$
for every $\b>1$ so that the integral $\int_\R \abs{\Pa(b(s))}^{2}\;d\l(s)$ is not finite. Consequently, the multiplication equation with the above $b(\cdot)$ cannot be solved with arbitrary
accuracy (as~$\delta\to 0$) under white noise by using any
regularization.
\end{xmpl}

\begin{xmpl}
  [Lavrent'ev regularization, continued]
  Suppose that~$\mu(S)=\infty$, and that~$b$ vanishes at infinity. For Lavrent'ev regularization we then
  see that
  \begin{align*}
  \int_{S}\abs{\Pa(b(s))}^{2}\;d\mu(s) & \geq
  \int_{\set{s,\ b(s) \leq  \alpha}}\frac 1 {(\alpha + b(s))^{2}}\;d\mu(s)\\
    &\geq \frac 1
  {4\alpha^{2}} \mu\lr{\set{s,\ b(s) \leq \alpha}} = \infty.
  \end{align*}
\end{xmpl}
However, under Assumption~\ref{ass:finite-condition} we have that
$$
\int_{S}\abs{\Pa(b(s))}^{2}\;d\mu(s) = \int_{\set{b>\alpha}}\abs{\Pa(b(s))}^{2}\;d\mu(s),
$$
and this will be finite for functions~$b$ vanishing at infinity.

We recall the decreasing rearrangement~$\bast$ of the multiplier
function~$b$. Since both~$b$ and~$\bast$ share the same distribution
function we can use the transformation of measure formula to  see for any
(measurable) function~$H\colon [0,\norm{b}{\infty})\to \real$ that
$$
\int_{[0,\mu(S))} \abs{H(\bast(t))}^{2} \;d\lambda(t)
= \int_{S}\abs{H(b(s))}^{2} \;d\mu(s)
$$
In particular this holds for spectral cut-off as in
Example~\ref{xmpl:cut-off-def}, used as~$H(s):=
\Pa^{\text{c-o}}(b(s)\chi_{\set{b(s) >\alpha}}$, yielding
\begin{equation}
  \label{eq:measure-trafo-c-o}
\int_{\set{\bast>\alpha}} \frac 1 {\abs{\bast(t)}^{2}} \;d\lambda(t)
= \int_{\set{b > \alpha}}\frac 1 {\abs{(b(s))}^{2}} \;d\mu(s).
\end{equation}
We now observe that from the definition of regularization functions,
see Definition~\ref{de:regularization} we have for arbitrary
regularization~$\Pa$  that~$\Pa(t) \leq
\frac{C_{0}+1}{t}$, and hence that
\begin{align*}
\int_{\set{b > \alpha}}\abs{\Pa(b(s))}^{2} \;d\mu(s) & \leq
                                                       (C_{0}+1)^{2}\int_{\set{b
                                                       > \alpha}}\frac
                                                       1
                                                       {\abs{(b(s))}^{2}}
                                                       \;d\mu(s) \\
& = (C_{0}+1)^{2}\int_{\set{\bast>\alpha}} \frac 1 {\abs{\bast(t)}^{2}} \;d\lambda(t).
\end{align*}
This gives rise to the following
\begin{de}
  [statistical effective ill-posedness]\label{de:effective-ill-posedness}
  Suppose that we are given the function~$b>0$ on the measure
  space~$(S,\mathcal F,\mu)$. For a function~$b$ that vanishes at infinity
  we call the function
  \begin{equation}
    \label{eq:degree-ill-posedness}
    D(\alpha) := \lr{\int_{\set{\bast>\alpha}} \frac 1
    {\abs{\bast(t)}^{2}} \;d\lambda(t)}^{1/2},\quad \alpha>0,
  \end{equation}
  the statistical effective ill-posedness of the operator.
\end{de}
\begin{xmpl}
  [Spectral cut-off, counting measure]\label{xmpl:counting}
  Suppose that~$S=\nat$ and~$\mu$ is the counting measure
  assigning~$\mu(\set{j}) = 1,\ j\in\nat$, and that the function~$j\mapsto
  b(j)$ is non-increasing with~$\lim_{j\to \infty} b(j) = 0$. 
    Then it vanishes at infinity, and
  %
  for each~$\alpha>0$ there will be a maximal finite
  number~$N_{\alpha}$ with~$b(N_{\alpha}) \geq \alpha >
  b(N_{\alpha}+1)$.
  
  In this case the statistical effective ill-posedness evaluates as
  $$
  D(\alpha) =\lr{\sum_{j=1}^{N_{\alpha}}\frac 1 {b_{j}^{2}}}^{1/2},\quad \alpha >0.
  $$
  This corresponds to the 'degree of ill-posedness for statistical
  inverse problems' as given in~\cite{deg}.
  The bias-variance decomposition
  from~(\ref{eq:err-deco-white-noise}) is known to be order optimal,
  cf.~\cite{MR3859257}.
\end{xmpl}
The following bound simplifies the statistical effective ill-posedness,
and can often be used.

\begin{lem}\label{lemma-1}
  Let~$\Pa$ be any regularization. Under
  Assumptions~\ref{ass:distfunction} and~\ref{ass:finite-condition} we
  have that
  $$
  D(\alpha) \leq
  \frac{1}{\alpha}\sqrt{\mu(\set{s:\ b(s) >\alpha})},\ \alpha>0,
  $$
  and
  $$
  \int_{S}\abs{\Pa(b(s))}^{2}\;d\mu(s) \leq
  \frac{C_{-1}^{2}}{\alpha^{2}}\mu\lr{\set{b>\alpha}}
  $$
\end{lem}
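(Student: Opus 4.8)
The plan is to treat the two displayed inequalities separately, since the first concerns only the decreasing rearrangement~$\bast$ whereas the second is the one that actually brings in the regularization~$\Pa$. For the first bound I would start directly from the definition~\eqref{eq:degree-ill-posedness} of~$D(\a)$. On the set~$\set{t:\ \bast(t) > \a}$ one has the crude pointwise estimate~$1/\abs{\bast(t)}^{2} \leq 1/\a^{2}$, so that
$$
D(\a)^{2} \leq \frac{1}{\a^{2}}\, \lambda\lr{\set{t:\ \bast(t) > \a}}.
$$
The decisive input is equimeasurability: since Assumption~\ref{ass:distfunction} guarantees, via the Remark following it, that~$\bast$ shares the distribution function of~$b$, I may replace the Lebesgue measure of the super-level set of~$\bast$ by the~$\mu$-measure of the corresponding super-level set of~$b$, that is~$\lambda(\set{\bast > \a}) = \mu(\set{s:\ b(s) > \a})$. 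Taking square roots then yields the first inequality.

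For the second bound I would exploit Assumption~\ref{ass:finite-condition}, which forces~$\Pa$ to vanish on~$\set{t:\ t \leq \a}$. Consequently~$\Pa(b(s)) = 0$ whenever~$b(s) \leq \a$, and the domain of integration collapses to the super-level set~$\set{b > \a}$, giving
$$
\int_{S}\abs{\Pa(b(s))}^{2}\,d\mu(s) = \int_{\set{b > \a}}\abs{\Pa(b(s))}^{2}\,d\mu(s).
$$
On this remaining set I would invoke property~(II) of Definition~\ref{de:regularization}, which provides the uniform bound~$\abs{\Pa(b(s))}^{2} \leq C_{-1}^{2}/\a^{2}$, and then pull the constant out of the integral to arrive at~$C_{-1}^{2}\a^{-2}\mu(\set{b > \a})$.

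Neither step presents a genuine analytic obstacle; the only point demanding care is that equimeasurability be legitimately available, which is precisely the role of Assumption~\ref{ass:distfunction}, and this is where I would be most careful. I would also note explicitly that the first inequality is in truth a statement about~$\bast$ alone and makes no use of the regularization, so that the hypothesis ``let~$\Pa$ be any regularization'' is required only for the second assertion. With these observations in place, both bounds follow by combining the pointwise estimates above with, respectively, equimeasurability and Assumption~\ref{ass:finite-condition}.
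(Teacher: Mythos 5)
Your proof is correct and follows essentially the same route the paper intends: the paper's own proof is the single line ``follows from Definition~\ref{de:regularization}'', and your argument simply supplies the details --- the pointwise bound $1/\abs{\bast(t)}^{2}\leq 1/\alpha^{2}$ on $\set{\bast>\alpha}$ combined with equimeasurability for the first inequality, and Assumption~\ref{ass:finite-condition} together with property~(II) for the second. Your added remark that the first inequality does not actually use the regularization is accurate and a fair observation.
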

\begin{proof}
  The result follows from   Definition~\ref{de:regularization}.
\end{proof}

We summarize the preceding discussion as follows. Suppose that the
measure~$\mu(S)=\infty$, and that assumptions~\ref{ass:distfunction}
and~\ref{ass:finite-condition} hold true. The error
decomposition~(\ref{eq:err-deco-white-noise}) then yields
\begin{equation}
  \label{eq:error-deco-D}
   \E\norm{f - \fad}{\Lmu}^{2} \leq \norm{\Ra(b) f}{\Lmu}^{2} +
   \delta^{2} (C_{0}+1)^{2}D^{2}(\alpha),\quad \alpha>0.
\end{equation}

Using this we obtain the following analog of Proposition
\ref{est-derministic}.

\begin{prop}\label{est-white}
  Suppose that the solution $f$ satisfies the source condition as in
  Definition~\ref{de:source-condition}, and that  a
  regularization~$\Pa$ is chosen with
  qualification~$\varphi$. Suppose, in addition,  that
  Assumptions~\ref{ass:distfunction} and~\ref{ass:finite-condition} hold. Then, for the case of white noise $\xi$,
  $$
  \mathbb{E} \norm{f - \fad}{\Lmu}^2  \leq C_{\varphi}^2\varphi(\alpha) ^2 +\delta^{2} (C_{0}+1)^{2}D^{2}(\alpha),\quad \alpha>0.
  $$
  The \emph{a priori parameter choice}~$\alpha_{\ast}=
  \alpha_{\ast}(\varphi,D,\delta)$ from solving the equation
  \begin{equation}
    \label{eq:a-priori-choice1}
    \varphi(\alpha) = \delta D(\alpha)
  \end{equation}
  yields the error bound
  \begin{equation}
    \label{eq:a-priori-error1}
   \lr{ \mathbb{E} \norm{f - \fad}{\Lmu}^2}^{1/2}  \leq \sqrt
   2\max\set{C_{\varphi},(C_{0}+1)}\varphi(\alpha_{\ast}). 
  \end{equation}
\end{prop}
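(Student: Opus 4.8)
The plan is to assemble the estimates already prepared in this section rather than to re-derive anything from scratch. The natural starting point is the bias--variance decomposition~(\ref{eq:bias-variance-deco}), which splits the mean squared error into a purely deterministic squared bias~$\norm{\Ra(b)f}{\Lmu}^{2}$ and a variance term~$\delta^{2}\int_{S}\abs{\Pa(b(s))}^{2}\,d\mu(s)$. I would bound these two summands separately and then balance them by the a priori choice of~$\alpha$.

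For the bias I would invoke the source condition of Definition~\ref{de:source-condition} together with the qualification property, exactly as carried out in~(\ref{eq:triangle-bound-1}), to obtain~$\norm{\Ra(b)f}{\Lmu}\leq C_{\varphi}\varphi(\alpha)$. For the variance I would use Assumption~\ref{ass:finite-condition} to restrict the integral to the set~$\set{b>\alpha}$, then apply the pointwise bound~$\Pa(t)\leq (C_{0}+1)/t$ (which follows from property~(III) in Definition~\ref{de:regularization}, since~$t\Pa(t)=1-\Ra(t)\leq 1+C_{0}$), and finally pass to the decreasing rearrangement through the transformation-of-measure identity so as to recover the statistical effective ill-posedness~$D(\alpha)$ of Definition~\ref{de:effective-ill-posedness}. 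These are precisely the steps leading to~(\ref{eq:error-deco-D}), so combining that bound with the bias estimate immediately yields the first displayed inequality of the proposition. In this sense the substantive analytic work has already been done upstream, and the proof is mainly an act of assembly.

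It then remains to carry out the a priori parameter choice. Substituting~$\alpha_{\ast}$, chosen to solve the balancing equation~(\ref{eq:a-priori-choice1}), into the combined bound turns the variance term~$\delta^{2}D^{2}(\alpha_{\ast})$ into~$\varphi(\alpha_{\ast})^{2}$, so that both summands are comparable to~$\varphi(\alpha_{\ast})^{2}$; the elementary inequality~$C_{\varphi}^{2}+(C_{0}+1)^{2}\leq 2\max\set{C_{\varphi},(C_{0}+1)}^{2}$ then produces the factor~$\sqrt 2$ once the square root is taken. The one point deserving care, which I expect to be the main (if modest) obstacle, is the existence of a solution~$\alpha_{\ast}$ to~(\ref{eq:a-priori-choice1}): here~$\varphi$ is increasing with~$\varphi(t)\to 0$ as~$t\to +0$, while~$D(\alpha)$ is non-increasing and typically unbounded as~$\alpha\to 0$, so the two sides must cross. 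When~$D$ fails to be continuous---for instance the step function arising under a counting measure, cf.~Example~\ref{xmpl:counting}---an exact crossing need not occur, and one would instead define~$\alpha_{\ast}$ as the infimum of those~$\alpha$ satisfying~$\varphi(\alpha)\geq\delta D(\alpha)$ and check that the stated error bound survives with the same constant.
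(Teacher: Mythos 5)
Your proposal is correct and follows essentially the same route as the paper, whose own proof is just the assembly you describe: the bias--variance decomposition~(\ref{eq:bias-variance-deco}) with~(\ref{eq:bias-variance-deco-1}), the qualification bound~(\ref{eq:triangle-bound-1}) for the bias, the variance bound via Assumption~\ref{ass:finite-condition} and the rearrangement identity leading to~$D(\alpha)$, and the elementary balancing step yielding the factor~$\sqrt{2}$. Your added caution about the existence of a solution~$\alpha_{\ast}$ of~(\ref{eq:a-priori-choice1}) when~$D$ is discontinuous is a reasonable refinement that the paper passes over in silence.
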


\begin{proof}
Follows from Definitions~\ref{de:qualification}
and~\ref{de:effective-ill-posedness}, (\ref{eq:bias-variance-deco}),
and~(\ref{eq:bias-variance-deco-1}).
\end{proof}

 \section{Operator equations in Hilbert space}
\label{sec:op-eqns}

As outlined in the introduction the setup of multiplication operators
as analyzed here  is prototypical for
general bounded self-adjoint positive operators~$\Aop\colon H \to H$ mapping in the  (separable) Hilbert
space~$H$ due to the associated Spectral Theorem
(cf.~\cite{MR0150600}), stated as {\bf Fact}. It is an advantage of our focus
on multiplication operators that we can include {\it compact} linear operators and {\it non-compact} ones as well.

\begin{xmpl}
  [Compact operator]\label{xmpl:compact}
It was emphasized in~\cite{MR2211346} that
the case of a compact positive self-adjoint operator~$\Aop$ yields a
multiplier version with~$S=\nat$, $\Sigma=\mathcal{P}(\nat)$, and~$\mu$ being the
counting measure, i.e.,\ $L^2(S, \Sigma,\mu) = \ell^{2}$,  and multiplier~$b:=
\lr{b_{j}}_{j\in\nat}$, where~$b_{j}$ denotes the $j$th eigenvalue
taking into account (finite) multiplicities.  White noise in~$\ell^{2}$
is given by a sequence of i.i.d. random
variables~$\xi_{1},\xi_{2},\dots$ with mean zero and variance one.
\end{xmpl}

In the subsequent discussion we shall highlight the impact of the
previous results, presented for equations with multiplication operator
for specific operator equations with non-compact operator~$\Aop$.

\subsection{Deconvolution}
\label{sec:deconvolution}

Suppose that data~$\yd$ are a real-valued function on $\real$ and given as
\begin{equation}
  \label{eq:convol-base}
  \yd(t) = \lr{r *x}(t) + \delta \eta(t),\quad t\in\real.
\end{equation}
In the above, $\lr{r * x}(t) := \int_{\real} r(u-t) x(u) \; du$ for
$t\in \real $. The noise~$\eta$ is assumed to be symmetric around
  zero and (normalized) weighted white noise~$\eta(t) := w(t) dW_{t},\
  t\in\real$,  with a square integrable weight normalized
  to~$\norm{w}{L^{2}(\real)}=1$.
The goal is to find approximately the function~$x(t),\ t \in \real$, based on noisy
data~$\yd$. This problem is usually called~\emph{deconvolution}.

\subsubsection{Turning to multiplication in frequency space}
\label{sec:frequency}

In order to transfer the deconvolution task into the multiplication
form~(\ref{eq:opeqmult}) with noisy data (\ref{eq:ill-posed-multiplier}) we
 use the Fourier transform to get
\begin{equation}
  \label{eq:F-conv}
  \gd(s) := \hat{{\yd}}(s) = \hat r(s) \hat x(s) + \delta
  \hat\eta(s),\quad s\in\real.
\end{equation}
We make the following assumptions.
First, we  assume that the kernel function~$r \in L^{1}(\real)$ is
non-negative,
symmetric around zero, and that~$u \mapsto r(u),\ u>0$ is
non-increasing. In this case its Fourier transform~$b(s) := \hat
r(s)$ is \emph{non-negative and real valued}. Also, $b\in
C_0(\real)$,
and zero is an accumulation point of
the essential range of $b$. Thereby the corresponding multiplication operator
does not have closed range.  We denote~$f(s) = \hat x(s),\ s\in\real$.
 Then it is easily
checked that the Fourier transform~$\xi(s) := \hat \eta(s)$ is
centered Gaussian, and~$\E
\xi(s) \bar\xi(s^{\prime}) = 0$ whenever~$s\neq s^{\prime}$.
  By the properties of the noise, as described before, the
variance is given as
$$
\E\abs{\xi(s)}^{2} = \int_{\real} \abs{w(u)}^{2} \; du =1.
$$
Thus we arrive at the multiplication equation~(\ref{eq:F-conv})
as in Section~\ref{sec:intro}.

\subsubsection{Relation to reconstruction of stationary time series}
\label{sec:stationary}
Historically, the deconvolution problem was first studied by Wiener
in~\cite{MR0031213}. 
 In that context the solution~$f$ in~(\ref{eq:base-multiplication}) is a
stationary time series~$f_s(\omega)$ with (constant) average signal strength~$S_{f}
:= \E\abs{f(s)}^{2}$. Then we may look for a (real valued)
multiplier~$h(s),\ s\in S$ such that~$f^{\delta}(s):= h(s) \gd(s)$ is a MISE
estimator, i.e.,\ it minimizes (point-wise) the functional
\begin{equation}
  \label{eq:mise}
  \E_{f}\E_{\xi}\abs{f^{\delta}(s) - f(s)}^{2},\quad s\in S.
\end{equation}
Assuming that the noise~$\xi(s)$ is independent from the signal~$f(s)$
the above minimization problem can be rewritten as
$$
 \E_{f}\E_{\xi}\abs{f^{\delta}(s) - f(s)}^{2} = \abs{1 -
   h(s)b(s)}^{2}S_{f} + \abs{h(s)}^{2}\E\abs{\xi(s)}^{2},\ s\in S.
 $$
 The minimizing function~$h(s)$ (in the general complex valued case,
 and with~$\bar b$ denoting the complex conjugate to $b$) has the form
 \begin{equation}
   \label{eq:h-minimal}
   h(s) := \frac{\bar b(s) S_{f}}{\abs{b(s)}^{2} S_{f} + \delta^{2}} =
   \frac{\bar b(s)}{\abs{b(s)}^{2} + \frac{\delta^{2}}{s_{f}}}.
 \end{equation}
 This approach results in the classical \emph{Wiener
  Filter}, see~\cite{MR0031213}. 
 Notice that the quotient~$\sqrt{S_{f}}/\delta$ is the
 signal-to-noise ratio, a constant which is unknown, thus
 replacing~$\delta^{2}/S_{f}$ by $\alpha$ we arrive at the
 reconstruction formula
$$
\fd (s) := \frac{\bar b(s)}{ \alpha + \abs{b(s)}^{2}} \gd(s),\quad s\in\real,
$$
being the analog to Tikhonov regularization.

However, since here we assume~$b$ to be real and positive, one may propose the Lavrent'ev
approach resulting in
\begin{equation}
  \label{eq:conv-lavr}
  \fd(s) := \frac{1}{\alpha + b(s)}\gd(s),\quad s\in\real,
\end{equation}
and hence to the regularization scheme~$\Pa(t) := 1/(\alpha + t),\
\alpha >0,\ t>0$ as introduced in~\S~\ref{sec:assumptions}. Other
regularization schemes also apply.

\subsection{Final value problem}
\label{sec:fvp}
\def\<{\langle}
\def\>{\rangle}
In the final value problem (FVP), also known as backward heat conduction problem associated with the
heat equation
\begin{equation}\label{FVP-1}
\frac{\partial }{\partial t}u(x, t) = c^2 \Delta u(x,t),\quad  x\in \Omega,\, \, 0<t<\tau,
\end{equation}
one would like to determine the initial temperature
$f_0:= u(\cdot, 0)$,  from the knowledge of the final temperature
$f_\tau:= u(\cdot, \tau)$ . Here, the domain~$\Omega$ is in~$\mathbb{R}^d$.
This problem is known to be ill-posed.


It can be considered as an
operator equation with multiplication operator. A similar FVP was
considered in the recent study~\cite{2019arXiv190711076T}.

\subsubsection{$\Omega=\mathbb{R}^d$}
\label{sec:rd}

In this case,
on taking  Fourier transform of the functions on both sides of equation (\ref{FVP-1}) , we obtain
$$
\frac{\partial }{\partial t}\hat u(s, t) = - c^2 |s|^2  \hat u(s, t),\quad  s\in \mathbb{R}^d,\, \, 0<t<\tau.
$$
For each fixed $s\in \Omega$, the above equation is an ordinary differential  equation, and hence the solution  $\hat u(s, t)$ is given by
$$\hat u(s, t) = e^{-c^2t |s|^2}\hat f_0(s),\quad s\in \mathbb{R}^d,$$
where $f_0(x):=u(x, 0),\, x\in \mathbb{R}^d$. In particular, with $t=\tau$,  we have
$$\hat u(s, \tau) = e^{-c^2\tau |s|^2}\hat f_0(s),\quad s\in \mathbb{R}^d.$$
Taking
$$f(s):= \hat f_0(s),\quad g(s):= \hat f_\t(s),\quad b(s):= e^{-c^2t |s|^2},$$
the above equation takes the form
\begin{equation}\label{FVP-2}
  b(s) f(s) = g(s),\quad s\in \mathbb{R}^d.
\end{equation}
Here, one may assume that the actual data $g(\cdot)$
belongs to $\in L^2(\mathbb{R}^d)$. The problem is to determine the function $f(\cdot)\in  L^2(\mathbb{R}^d)$ satisfying  {\it multiplication} operator equation (\ref{FVP-2}).

We may recall that the map $h\mapsto \hat h$ is a bijective linear isometry from $L^2(\R^d)$ into itself. Therefore, if~$f_\t^\d$ is a noisy data, then
$$\|f_\t-f_\t^\d\|_{L^2(\R^d)} = \|g-g^\d\|_{L^2(\R^d)},$$
where $g^\d:= \hat{f_\t^\d}$. Hence, if $f^\d$ is an approximate solution corresponding to the noisy data $g^\d$, and if $f_0^\d$ is the inverse Fourier transform of $f^\d$, then we have
$$\|f_0-f_0^\d\|_{L^2(\R^d)} = \|f-f^\d\|_{L^2(\R^d)}.$$
Thus, in order to obtain the error estimates for the regularized solutions corresponding to noisy measurements $f_\t^\d$, it is enough to consider the noisy equation as in (\ref{eq:base-multiplication}), that is,
$$
g^\d(s) = b(s) f(s) + \d \xi(s),\quad s\in \mathbb{R}^d.
$$
%

\subsubsection{$\Omega$ is a bounded domain in $\mathbb{R}^d$}
\label{sec:bounded}
For the purpose of illustration, let us assume that the the temperature is kept  at $0$ at the boundary of $\Omega$, that is,
$$u(x, t) = 0\quad\hbox{for}\quad x\in \partial \Omega.$$
Then the solution of the equation (\ref{FVP-1})  along with the the initial condition
$$u(x, 0) = f_0 (x),\quad x\in \Omega,$$
is given by (see~\cite[\S~4.1.2]{MR2521497})
$$u(x, t) = \sum_{n=1}^\infty e^{-c^2\lambda_n^2t}\< f_0, v_n\>  v_n(x).$$
Here $(\lambda_n)$ is a non-decreasing sequence of non-negative real numbers such that $\lambda_n\to \infty$ as $n\to\infty$ and
$(v_n)$ is an orthonormal sequence of functions in $L^2(\Omega)$.  In fact, each $\lambda_n$ is an eigenvalue of the operator~$(-\Delta)$ with corresponding  eigenvector $v_n$.
For $t=\tau$, taking  $f_\tau := u(\cdot, \tau)$, we have
$$f_\tau(x)  = \sum_{n=1}^\infty e^{-c^2\lambda_n^2\tau}\< f_0, v_n\>  v_n(x).$$
Equivalently,
\begin{equation}\label{FVP-3}
\<f_\tau, v_n\>=  e^{-c^2\lambda_n^2\tau} \<f_0, v_n\>,\quad n\in \mathbb{N}.
\end{equation}
Writing
$$ g:=(\<f_\tau, v_n\>),\quad f:=(\<f_0, v_n\>),\quad b:= (e^{-c^2\lambda_n^2\tau}),$$
the system of equations in (\ref{FVP-3}) takes the form a multiplication operator equation
\begin{equation}\label{FVP-3new}
b(n) f(n) = g(n),\quad n\in \mathbb{N},
\end{equation}
where $g$ and $f$ are in $\ell^2(\N)$ and $b$ is in $c_0(\N)$, the space all null sequences.

As in~\S~\ref{sec:rd}, we have
$$\|f_\t-f_\t^\d\|_{L^2(\O)} = \|g-g^\d\|_{\ell^2(\N)}\quad\hbox{and}\quad
\|f_0-f_0^\d\|_{L^2(\O)} = \|f-f^\d\|_{\ell^2(\N)},$$
where $g^\d\in \ell^2(\N)$ and $f_0^\d\in L^2(\O)$ are constructed from the bijective linear isometry
$h\mapsto (\<h, v_n\>)$ from $L^2(\O)$ onto $\ell^2(\N)$, that is,
$$g^\d(n):= \<f_\t^\d, v_n\>\quad\hbox{and}\quad f_0^\d:=\sum_{n=1}^\infty \<f^\d, v_n\> v_n.$$


\end{document}